\newtheorem{thrm}{Theorem}[section]
\newtheorem{lem}[thrm]{Lemma}
\newtheorem{cor}[thrm]{Corollary}
\theoremstyle{definition}
\newtheorem{definition}[thrm]{Definition}
\newtheorem{remark}[thrm]{Remark}
\newtheorem{example}[thrm]{Example}
\numberwithin{equation}{section}
\newcommand{\bb}{\mathbb}
\newcommand{\mc}{\mathcal}
\DeclareMathOperator{\vspan}{span}
\DeclareMathOperator{\aff}{aff}
\DeclareMathOperator{\lin}{lin}
\DeclareMathOperator{\conv}{conv}
\DeclareMathOperator{\verti}{vert}
\title[Triangulating the Matroid Polytope]{A Regular Unimodular Triangulation of the Matroid Base Polytope}
\author{Spencer Backman}
\email{sbackman@uvm.edu}
\author{Gaku Liu}
\email{gakuliu@uw.edu}
\begin{document}

\begin{abstract}
    We produce the first regular unimodular triangulation of an arbitrary matroid base polytope.  We then extend our triangulation to integral generalized permutahedra. Prior to this work it was unknown whether each matroid base polytope admitted a unimodular cover.  
\end{abstract}

\maketitle

\section{Introduction}

Despite considerable interest, very little is known about triangulations of matroid base polytopes.  There are a few motivations for wanting to have nice triangulations of matroid base polytopes.  The first motivation comes from White's conjecture whose weakest version states that the toric ideal of a matroid base polytope is quadratically generated \cite{white1977basis}\cite{lason2014toric}.  Herzog and Hibi asked whether the toric ideal of every matroid base polytope has a quadratic Gr\"obner basis \cite{herzog2002discrete}.  It follows by a result of Sturmfels \cite{sturmfels1996grobner} combined with an observation of Ohsugi and Hibi \cite{ohsugi1999toric} that the existence of a quadratic Gr\"obner basis is equivalent to the existence of a quadratic triangulation, i.e. a regular unimodular flag triangulation.  The existence of a quadratic triangulation is known for base sortable matroids, e.g.  positroids \cite{stanley1977eulerian,sturmfels1996grobner,blum2001base, lam2007alcoved, lam2018alcoved}.

The second motivation comes from Ehrhart theory.  A formula for the volume of a matroid base polytope was calculated by Ardila--Doker--Benedetti \cite{ardila2010matroid}, but no formula is currently known which is cancellation free, i.e. involves no subtraction.  If a polytope  $P$ admits a unimodular triangulation $\mc T$, then the volume of $P$ is equal to the number of maximal simplices in $\mc T$.  The volume of a polytope occurs as the leading coefficient of the Ehrhart polynomial. Several researchers have investigated Ehrhart polynomials for matroid base polytopes \cite{castillo2018berline}\cite{jochemko2022generalized} \cite{ferroni2022ehrhart} largely motivated by the conjecture of De Loera--Haws--K\"oppe \cite{de2009ehrhart}  that matroid base polytopes are Ehrhart positive---this conjecture was recently disproven by Ferroni \cite{ferroni2022matroids}, but various other questions about these polynomials remain open.  The volume of a polytope $P$ is also given by the evaluation of the $h^*$-polynomial at 1.  Another conjecture by De Loera--Haws--K\"oppe, which remains open, is that the $h^*$-vectors of matroid base polytopes are unimodal \cite{de2009ehrhart}.  Ferroni further conjectures that the $h^*$-polynomial of a matroid polytope (more generally an integral generalized permutahedron) is real-rooted \cite{ferroni2022ehrhart,ferronicomm}.  It has been conjectured that if a polytope $P$ has the integer decomposition property (is IDP), then $P$ has a unimodal $h^\ast$-vector \cite{schepers2013unimodality}, and it is known that every matroid base polytope is IDP \cite{herzog2002discrete}.   We note that the property of admitting a unimodular triangulation is strictly stronger than the property of being IDP \cite{bruns1999normality}.  We refer the reader to \cite{ferroni2023examples} for a comprehensive survey of results in this area.  It is known that the $h^*$-vector of a polytope is equal to the $h$-vector of any unimodular triangulation of the polytope \cite{stanley1980decompositions}\cite{betke1985lattice}, thus one might hope that such a triangulation could shed some light on this conjecture.

A natural question which sits in between these various results and conjectures is whether each matroid base polytope admits a (not necessarily flag) regular unimodular triangulation.  That the matroid base polytope admits a (not necessarily regular) unimodular triangulation was conjectured by Haws in their 2009 thesis \cite{haws2009matroid}.  In this paper we give an affirmative answer to this question by providing a regular unimodular triangulation of an arbitrary matroid base polytope.  We then apply this result to produce a regular unimodular triangulation of an arbitrary integral generalized permutahedron, and explain how this gives a regular unimodular triangulation of the matroid independence polytope.  We emphasize that prior to this work it was unknown whether every matroid base polytope admitted a unimodular cover (this was also conjectured by Haws \cite{haws2009matroid}) let alone a unimodular triangulation.  Our construction produces many different triangulations, but at the time of writing we do not know if any of them are flag.  We invite other researchers to try their hand at applying our triangulation to the topics above.  See Remark \ref{whiteproblem}.

\section{Preliminaries}
We recommend the following texts for an introduction to matroids \cite{Oxley2011matroid}, polytope theory \cite{ziegler2012lectures}, and triangulations \cite{de2010triangulations}\cite{haase2021existence}.  Let $[n]$ denote the set of integers $\{1,\dots,n\}$.  Given $S \subseteq [n]$ we will employ the notation  $x_S := \sum_{i \in S} x_i$.  We identify $\{0,1\}^n$ with the collection of all subsets of $[n]$.  We denote the standard basis vectors for $\mathbb{R}^n$ by $e_i$ for $1\leq i\leq n$. 

\begin{definition}
    A \emph{matroid} is a pair $M =(E,\mathcal{B})$ where $E$ is a finite set called the \emph{ground set}, and $\mathcal{B}$ is a nonempty collection of subsets of $E$ called the \emph{bases} which satisfy the following basis exchange condition:

    \begin{itemize}
        \item For any $B_1,B_2 \in \mathcal{B}$ and $x \in B_1\setminus B_2$, there exists some $y \in B_2 \setminus B_1$ such that $(B_1 \setminus \{x\}) \cup \{y\} \in \mathcal{B}$.
    \end{itemize}

A set $I\subseteq E$ is \emph{independent} if there exists some basis $B \in \mc B$ such that $I \subseteq B$.  The collection of independent sets is denoted $\mc I$.  The \emph{rank} of a set $S \subseteq E$, written $r(S)$, is the maximum cardinality of an independent set contained in $S$.
\end{definition}

Matroid independence  polytopes and the matroid base polytopes were introduced by Edmonds \cite{edmonds2003submodular}.

\begin{definition}
    Given a matroid $M$ on ground set $E = [n]$, the \emph{matroid base polytope} $P_M$ is the convex hull of the indicator vectors for the bases of $M$, and the \emph{matroid independence polytope} $P_{\mc I}$ is the convex hull of the indicator vectors of the independent sets.  More explicitly, given $S\subseteq E$, we define the indicator vector $\chi_S \in \mathbb{R}^n$ by
\[ \chi_S(i)=
  \begin{cases} 
      1 & i \in S \\
      0 & i \notin S
      \end{cases} \]
Thus $P_M = \conv\{\chi_B: B \in \mc B\}$ and $P_{\mc I}= \conv\{\chi_I: I \in \mc I\}$.

\end{definition}

The matroid base polytope is the distinguished face of the matroid independence polytope where the sum of the coordinates is maximized.  The matroid independence polytope will be discussed at the end of this article (see Corollary \ref{indep}).

Gelfand--Goresky--MacPherson--Serganova uncovered a connection between matroid base polytopes and the geometry of the Grassmannian \cite{gelfand1987combinatorial}.   They showed that torus orbit closure of a linear space $L$  in the Grassmannian is a normal toric variety whose weight polytope is the matroid base polytope $P_{M(L)}$, where $M(L)$ is the matroid determined by $L$.  See Katz \cite{katz2016matroid} for an overview of this story.  By standard toric theory, our regular unimodular triangulation of $P_M$ gives a projective Crepant resolution of the toric variety associated to the cone over a matroid base polytope.

Matroid bases polytopes allow for a polytopal characterization of matroids.

\begin{thrm}\label{gelmat} \cite{edmonds2003submodular}\cite{gelfand1987combinatorial}
A polytope $P$ is a matroid base polytope for some matroid $M$ if and only if $P$ is a 0-1 polytope whose edge directions are of the form $e_i-e_j$.
\end{thrm}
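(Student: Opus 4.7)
The plan is to prove the two directions separately; the reverse direction is where the real content lies.

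\textbf{Forward direction.} Assume $P = P_M$. Vertices $\chi_B$ are manifestly 0-1, so only the edge condition needs attention. Suppose $\chi_{B_1}\chi_{B_2}$ is an edge of $P$ and assume toward a contradiction that $|B_1\triangle B_2|\ge 4$. I would invoke the symmetric (strong) basis exchange theorem: for any $x\in B_1\setminus B_2$ there exists $y\in B_2\setminus B_1$ such that both $B_3:=(B_1\setminus x)\cup y$ and $B_4:=(B_2\setminus y)\cup x$ are bases. Since $|B_1\triangle B_2|\ge 4$, we have $\{B_3,B_4\}\cap\{B_1,B_2\}=\emptyset$, and by construction $\chi_{B_3}+\chi_{B_4}=\chi_{B_1}+\chi_{B_2}$. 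So the midpoint of $\chi_{B_1}\chi_{B_2}$ is also the midpoint of a distinct vertex pair, contradicting edge-ness.

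\textbf{Reverse direction.} Assume $P$ is a 0-1 polytope whose edge directions all have the form $e_i-e_j$. Let $\mc B=\{B\subseteq[n]:\chi_B\text{ is a vertex of }P\}$. Each edge direction $e_i-e_j$ preserves $\sum x_k$, so $P$ lies in a hyperplane $\sum x_k=r$; hence all $B\in\mc B$ have the same cardinality $r$. I need to verify basis exchange.

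Fix $B_1,B_2\in\mc B$ and $x\in B_1\setminus B_2$. Since $\chi_{B_2}\in P$, the difference $\chi_{B_2}-\chi_{B_1}$ lies in the tangent cone of $P$ at $\chi_{B_1}$, which is generated by the edge directions of $P$ emanating from $\chi_{B_1}$. So we may write
\[
\chi_{B_2}-\chi_{B_1}\;=\;\sum_k \lambda_k\,(e_{j_k}-e_{i_k}),\qquad \lambda_k>0,
\]
where each $e_{j_k}-e_{i_k}$ is an edge direction at $\chi_{B_1}$. Because $P\subseteq[0,1]^n$, such an edge must have $i_k\in B_1$ and $j_k\notin B_1$ (otherwise the other endpoint leaves the cube). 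Now read off coordinates. For $z\notin B_1\cup B_2$ the $z$-coordinate of the left side is $0$ and only $+\lambda_k$ contributions arise (from $j_k=z$), so no term in the sum has $j_k=z$; thus every $j_k$ lies in $B_2\setminus B_1$. For the $x$-coordinate, $(\chi_{B_2}-\chi_{B_1})_x=-1$ forces $\sum_{k:\,i_k=x}\lambda_k=1$, so some term has $i_k=x$, and for that term $y:=j_k\in B_2\setminus B_1$. Then $\chi_{B_1}+(e_y-e_x)=\chi_{(B_1\setminus x)\cup y}$ is a vertex of $P$, i.e.\ $(B_1\setminus x)\cup y\in\mc B$. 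This verifies basis exchange, so $\mc B$ is the collection of bases of a matroid $M$ and $P=P_M$.

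\textbf{Main obstacle.} The forward direction hinges on the (nontrivial but standard) symmetric basis exchange theorem, which I would cite rather than reprove. The subtler point in the reverse direction is the tangent-cone decomposition together with the observation that a 0-1 polytope forces $i_k\in B_1$, $j_k\notin B_1$; once this is set up, the coordinate bookkeeping on $x$ and on $z\notin B_1\cup B_2$ delivers the correct $y\in B_2\setminus B_1$ almost for free.
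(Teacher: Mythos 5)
Your proof is correct. Note that the paper does not prove this statement at all---it is quoted with citations to Edmonds and to Gelfand--Goresky--MacPherson--Serganova---so there is no internal argument to compare against; what you have written is essentially the standard GGMS-style argument. In the forward direction, the appeal to Brualdi's symmetric exchange theorem is the usual route: $\chi_{B_3}+\chi_{B_4}=\chi_{B_1}+\chi_{B_2}$ puts the midpoint of the purported edge in the relative interior of the segment $[\chi_{B_3},\chi_{B_4}]$, and since a face must contain any vertices whose convex combination lies in it, $\chi_{B_3},\chi_{B_4}$ would have to lie on the edge, which is impossible when $|B_1\triangle B_2|\ge 4$ (you may want to spell out this one-line face argument rather than just saying ``contradicting edge-ness''). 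In the reverse direction, the tangent-cone decomposition at $\chi_{B_1}$ together with the observation that $P\subseteq[0,1]^n$ forces $i_k\in B_1$, $j_k\notin B_1$ is exactly the right mechanism, and the coordinate bookkeeping is sound. Two small points worth making explicit: the constancy of $\sum_k x_k$ on $P$ uses connectedness of the vertex--edge graph, and the final step needs that the edge at $\chi_{B_1}$ in direction $e_y-e_x$ has lattice length $1$, which follows because both endpoints are $0$-$1$ vectors, so the other endpoint is precisely $\chi_{(B_1\setminus x)\cup y}$. With those remarks added, the argument is complete.
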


Polymatroids are a generalization of matroids described by monotonic submodular fuctions taking values in the nonnegative reals.  Their base polytopes are equivalent by translation to the generalized permutahedra of Postnikov \cite{postnikov2009permutohedra}.  See \cite{ardila2020coxeter} for a careful treatment of the following definition. 

\begin{definition}\label{genper}
    A \emph{generalized permutahedron} $P \subseteq \mathbb{R}^n$ is a polytope defined by any one of the following equivalent conditions:

    \begin{enumerate}
        \item \label{cond1} The edge directions for $P$ are all of the form $e_i-e_j$,
        \item\label{cond2} The normal fan of $P$ is a coarsening of the braid arrangement,
        \item\label{cond3}$P$ is defined by inequalities $x_S \leq f(S)$ where $f:\{0,1\}^n\rightarrow \mathbb{R}$ is a submodular function, together with a single equation $x_{[n]} =f([n])$.
    \end{enumerate}
\end{definition}

An \emph{integral generalized permutahedron} $P$ is a generalized permutahedron whose vertices have integer coordinates.  The following is well-known, and follows from the unimodularity of the set of primitive ray generators of each chamber in the braid arrangement.

\begin{lem}\label{permint}
Let $P$ be a generalized permutahedron determined by a submodular function $f$ as in condition (\ref{cond3}) of Definition \ref{genper}.  If $f$ is an integer-valued function then $P$ is an integral generalized permutahedron.  Moreover, if $P$ is an integral generalized permutahedron then $f$ may be chosen to be integer-valued.
\end{lem}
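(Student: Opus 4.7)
The plan is to use the standard identification of vertices of a generalized permutahedron with maximal chains of subsets of $[n]$.

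Suppose first that $f$ is integer-valued. By condition (\ref{cond2}) of Definition \ref{genper}, the normal cone at any vertex $v$ of $P$ contains a full-dimensional chamber of the braid arrangement, which corresponds to a permutation $\sigma$ and the maximal chain $\emptyset \subsetneq S_1 \subsetneq \cdots \subsetneq S_n = [n]$ with $S_k = \{\sigma(1),\dots,\sigma(k)\}$. Since $v$ maximizes every positive combination of the $x_{S_k}$ over $P$ (and every $x_{S_k}$ is bounded above by $f(S_k)$ on $P$ by condition (\ref{cond3})), the $n$ inequalities $x_{S_k} \leq f(S_k)$ are tight at $v$. Writing this as a linear system $Mv = b$, where $M$ is the $n \times n$ matrix with rows $\chi_{S_1},\dots,\chi_{S_n}$ and $b = (f(S_1),\dots,f(S_n))^T$, I would show that $M$ is unimodular: after permuting the columns of $M$ according to $\sigma$, one obtains a staircase $0/1$ matrix which reduces to the identity by subtracting each row from the next. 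Hence $\det M = \pm 1$ and $v = M^{-1} b \in \mathbb{Z}^n$.

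For the converse, assume $P$ is integral and set $\tilde f(S) := \max\{x_S : x \in P\}$. Submodularity of $\tilde f$ is a standard fact about generalized permutahedra: it can be deduced from condition (\ref{cond3}) applied to any submodular $f$ defining $P$, since $\tilde f \leq f$ and equality holds whenever the inequality $x_S \leq f(S)$ is active, and $\tilde f$ is the tightest function with this property. In particular $P$ is also defined by the inequalities $x_S \leq \tilde f(S)$ together with $x_{[n]} = \tilde f([n])$. Because $P$ is integral, each linear functional $x \mapsto x_S$ attains its maximum over $P$ at an integer vertex, so $\tilde f(S) \in \mathbb{Z}$. Thus $f$ may be replaced by the integer-valued submodular function $\tilde f$.

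The one step that requires actual verification is the unimodularity of $M$; everything else is an assembly of known facts about generalized permutahedra. I do not anticipate a genuine obstacle, which is consistent with the lemma being stated as ``well-known.''
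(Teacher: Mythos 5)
Your overall route (vertex of $P$ determined by the tight chain inequalities of a braid chamber in its normal cone, plus unimodularity of the chain matrix $M$) is exactly the standard argument the paper alludes to when it says the lemma ``follows from the unimodularity of the set of primitive ray generators of each chamber in the braid arrangement,'' and your treatment of the converse via $\tilde f(S)=\max\{x_S : x\in P\}$ is fine modulo the (genuinely standard) submodularity of the support function on $0/1$ directions. However, the forward direction as written has a real gap: from the fact that $v$ maximizes each $x_{S_k}$ over $P$ and that $x_{S_k}\le f(S_k)$ holds on $P$, you may only conclude $x_{S_k}(v)=\max_P x_{S_k}\le f(S_k)$, not equality. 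An inequality in the defining system can be slack (non-supporting), in which case your linear system $Mv=b$ with $b_k=f(S_k)$ simply does not hold at $v$, and the unimodularity of $M$ gives you nothing. Note that your argument for this direction never actually uses submodularity of $f$ beyond the equivalence of the conditions in Definition \ref{genper}, which is a warning sign: integrality of $P$ from integrality of $f$ is false without submodularity.

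The missing ingredient is precisely Edmonds' greedy theorem: for a submodular $f$ (after the harmless normalization $f(\emptyset)=0$; if $f(\emptyset)<0$ then $P=\emptyset$, and if $f(\emptyset)>0$ one may reset it to $0$ without changing $P$ or losing submodularity), the point $v$ with $v_{\sigma(k)}=f(S_k)-f(S_{k-1})$ lies in $P$ and maximizes every linear functional in the chamber of $\sigma$; equivalently, every inequality $x_S\le f(S)$ is tight, i.e.\ $f$ is the support function of $P$ in the $0/1$ directions. With that input your tightness claim is correct and the rest of your argument goes through --- although at that point the explicit greedy formula $v_{\sigma(k)}=f(S_k)-f(S_{k-1})$ already exhibits $v$ as an integer point, so the unimodular-matrix computation becomes an alternative packaging of the same fact rather than an independent step. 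So: right approach, but you must either invoke or prove the tightness/greedy statement rather than derive it from maximality alone.
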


In our proof, we will use condition (\ref{cond2}) from Definition \ref{genper} as this allows us to describe the affine span of a face of a matroid base polytope.

\begin{lem}\label{flag}
Let $P$ be an integral generalized permutahedron and $\aff(P)$ its affine span. Then

    \[\aff(P) = \bigcap_{i=1}^j \{ x_{S_i} = b_i \}
\]
for some flag of subsets $\emptyset = S_0 \subsetneq S_1 \subsetneq \dots \subsetneq S_j = [n]$ and some $b_i \in \bb Z$.
\end{lem}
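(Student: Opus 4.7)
My plan is to convert the statement about $\aff(P)$ into a statement about the lineality space of the normal fan $N(P)$ and exploit condition (\ref{cond2}) of Definition \ref{genper}: $N(P)$ coarsens the braid fan. Recall that for any polytope $P \subseteq \mathbb{R}^n$, the lineality space $L$ of $N(P)$ is the annihilator $(\aff(P) - \aff(P))^\perp$, so describing $\aff(P)$ reduces to describing $L$ as a subspace of $\mathbb{R}^n$, together with specifying the value of a single point in $\aff(P)$.

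\textbf{Identifying the lineality space.} Because $N(P)$ is a coarsening of the braid fan, its lineality space $L$ is itself a union of (closed) cones of the braid fan. I claim that the linear subspaces of $\mathbb{R}^n$ arising this way are precisely the subspaces
\[ L_\pi = \{x \in \mathbb{R}^n : x_i = x_j \text{ whenever } i,j \text{ lie in the same block of } \pi\} \]
as $\pi = \{T_1, \dots, T_k\}$ ranges over set partitions of $[n]$. Indeed, let $\Sigma$ be the set of pairs $(i,j)$ with $L \subseteq \{x_i = x_j\}$; the transitive closure of $\Sigma$ defines a set partition $\pi$ with $L \subseteq L_\pi$, and since a linear subspace that is a union of cones of the braid fan cannot properly cross any braid hyperplane (being a subspace, it would otherwise lie in both open half-spaces, contradicting that the braid cones on either side only meet on the hyperplane itself), one checks $L = L_\pi$. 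Taking orthogonal complements yields $L_\pi^\perp = \{y : y_{T_a} = 0 \text{ for all } a\}$, and hence for any $v \in P$,
\[ \aff(P) = v + L_\pi^\perp = \bigcap_{a=1}^k \{x : x_{T_a} = v_{T_a}\}. \]

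\textbf{Integrality and flag conversion.} Choosing $v$ to be a vertex of $P$, integrality of $P$ gives $v \in \mathbb{Z}^n$, so each constant $b_a := v_{T_a}$ is an integer. To pass to the flag form, I would order the blocks arbitrarily as $T_1, \dots, T_k$, set $S_i := T_1 \cup \cdots \cup T_i$ for $1 \le i \le k$, and observe that the system $\{x_{T_a} = b_a\}_{a=1}^k$ is equivalent to the triangular system $\{x_{S_i} = b_1 + \cdots + b_i\}_{i=1}^k$; setting $b_i' := b_1 + \cdots + b_i \in \mathbb{Z}$ and noting that $S_k = [n]$ (with $S_0 := \emptyset$ completing the flag) yields the required presentation. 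I expect the main obstacle to be the identification $L = L_\pi$: one has to verify carefully that a linear subspace realized as a union of braid cones must be cut out by some collection of the braid hyperplanes $\{x_i = x_j\}$ and nothing more exotic, which is the step where condition (\ref{cond2}) really gets used.
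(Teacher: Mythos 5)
The paper states Lemma \ref{flag} without proof, treating it as a known consequence of condition (\ref{cond2}) of Definition \ref{genper}, so there is no ``official'' argument to match; your plan reconstructs exactly the intended standard route. The reductions you perform are correct: the lineality space $L$ of the normal fan is $\vec{\aff}(P)^{\perp}$; if $L=L_\pi$ for a set partition $\pi=\{T_1,\dots,T_k\}$ then $L_\pi^{\perp}=\{y: y_{T_a}=0 \text{ for all } a\}$, so $\aff(P)=\bigcap_a\{x_{T_a}=v_{T_a}\}$ for any vertex $v$, integrality of $v$ gives integer right-hand sides, and the triangular passage from the block equations to the flag $S_i=T_1\cup\dots\cup T_i$ is fine.

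The genuine gap is in the step you yourself identify as the crux, namely $L_\pi\subseteq L$: the parenthetical argument is false as stated. A linear subspace that is a union of closed braid cones can certainly meet both open sides of a braid hyperplane; for example $L=\{x_1=x_2\}\subset\mathbb{R}^3$ is the union of the braid faces $\{x_1=x_2<x_3\}$, $\{x_1=x_2=x_3\}$, $\{x_1=x_2>x_3\}$ and it crosses $\{x_2=x_3\}$. Worse, if your ``cannot properly cross'' claim were true, then (since a subspace contained in a closed halfspace lies in its bounding hyperplane) $L$ would lie in \emph{every} braid hyperplane, i.e.\ in the diagonal, which is absurd in general. What you need is: a linear subspace that is a union of faces of the braid arrangement is a flat of the arrangement. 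A correct short argument: since $L$ is covered by finitely many closed faces, some face $\sigma\subseteq L$ has $\dim\sigma=\dim L$; a face of an arrangement is relatively open in, and spans, the flat $F_\sigma=\bigcap\{H:\sigma\subseteq H\}$ (it is a chamber of the arrangement restricted to that flat), so $F_\sigma=\vspan\sigma\subseteq L$, and by dimension count $F_\sigma=L$; finally a braid hyperplane contains $L$ if and only if it contains $\sigma$, so $L_\pi=\bigcap\{H: L\subseteq H\}=F_\sigma=L$. With this lemma inserted your proof is complete. Alternatively, you could avoid fans altogether via condition (\ref{cond3}): the tight sets $\{S: x_S=f(S)\text{ on }P\}$ are closed under union and intersection by submodularity, and the identity $x_S+x_T=x_{S\cup T}+x_{S\cap T}$ lets one replace the tight-set equations cutting out $\aff(P)$ by those of a maximal chain of tight sets, which yields the flag directly.
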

  
We note that when $P$ is a matroid base polytope, the $b_i$ in the lemma above is equal to the rank of the set $S_i$ viewed as a subset of the ground set of the matroid. 

\begin{definition}

    A \emph{subdivision} of a polytope $P$ is a collection of polytopes $\mathcal{S} = \{P_1, \ldots, P_k\}$ such that
    
    \begin{enumerate}
        \item $\bigcup_{i=1}^k P_i =P$ 
        \item for each $P_i \in \mathcal{S}$ and $F$ a face of $P_i$, there exists some $j$ such that $F=P_j$ 
        \item for any $i$ and $j$ with $1\leq i,j \leq k$, the intersection $P_i\cap P_j$ is a face of both $P_i$ and $P_j.$
        
    \end{enumerate}

A maximal polytope in $\mathcal{S}$ is a \emph{cell} of $\mathcal{S}$.
\end{definition}
\begin{definition}
    A \emph{triangulation} of a polytope $P$ is a subdivision $\mathcal{T} = \{T_1, \ldots, T_k\}$ of $P$ such that each polytope $T_i$ is a simplex.
\end{definition}

\begin{definition}
    Let $P \subset \bb R^n$ be a polytope and $S$ a finite subset of $P$ containing the vertices of $P$. Given a function $f : S \to \bb R$, the subdivision \emph{induced} by $f$ is the subdivision of $P$ formed by projecting the lower faces of the polytope
    \[
    \conv\{ (x,f(x)) : x \in S \} \subset \bb R^{n+1}.
    \]    
    A subdivision is \emph{regular} if it is induced by some function $f$.
\end{definition}

Given a set $S \subseteq \bb R^n$, let $\aff(S)$ denote the affine span of $S$. Let $\lin(S)$ denote the linear subspace of $\bb R^n$ with the same dimension and parallel to $\aff(S)$.


\begin{definition}
    A lattice simplex $T$ is \emph{unimodular} if it has normalized volume 1.  Equivalently, if $T$ has vertices $v_0,\ldots, v_n \in \mathbb{Z}^n$, then $T$ is unimodular whenever a maximal linearly independent set of edge vectors $\{v_i-v_j\}$ form a lattice basis for  $\lin(T) \cap \mathbb{Z}^n$.  
\end{definition}

\begin{definition}
    The \emph{resonance arrangement} $\mathcal{A}_n$ is the hyperplane arrangement in $\mathbb{R}^n$ consisting of all hyperplanes $H_S =\{x \in \mathbb{R}^n: x_S = 0\}$ where $\emptyset \subsetneq S\subseteq [n]$.
\end{definition}

For an introduction to the resonance arrangement (also called the \emph{all subsets arrangement}) we refer the reader to \cite{kuhne2023universality}.  A \emph{flat} of a hyperplane arrangement $\mathcal{H}$ is an intersection of hyperplanes in $\mathcal{H}$.

\begin{definition}\label{generic}
    We say that an affine functional $\ell:\mathbb{R}^n\rightarrow \mathbb{R}$ is \emph{generic} if it is non constant on each positive dimensional flat of the resonance arrangement.

\end{definition}

We note that a generic point $p$ on the $n$-th moment curve
\[
C_n = \{(t, t^2, \ldots, t^{n}): t \in \mathbb{R}\}
\]
produces a generic linear functional $x \mapsto \langle x,p \rangle$.

\section{A deletion-contraction triangulation}

In this section we establish the main result of this paper. 

\begin{thrm}\label{main}
Every matroid base polytope has a regular unimodular triangulation.
\end{thrm}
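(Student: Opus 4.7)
The plan is to prove the theorem by induction on the cardinality $n$ of the ground set $E$; the base case is trivial. For the inductive step I would fix an element $e \in E$ and exploit the deletion-contraction structure of the face lattice: the two parallel facets $F_0 := P_M \cap \{x_e = 0\}$ and $F_1 := P_M \cap \{x_e = 1\}$ of $P_M$ are, respectively, the base polytope $P_{M \setminus e}$ of the deletion and a translate of the base polytope $P_{M/e}$ of the contraction, and hence are themselves matroid base polytopes on ground sets of size $n - 1$. By the induction hypothesis each admits a regular unimodular triangulation $\mc{T}_0$ and $\mc{T}_1$.

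The main step is to assemble $\mc{T}_0$ and $\mc{T}_1$ into a regular unimodular triangulation $\mc{T}$ of the full polytope $P_M$. A natural approach is to specify a height function $h : \{\chi_B : B \in \mc{B}\} \to \bb{R}$ whose induced lower envelope recovers $\mc{T}_0$ and $\mc{T}_1$ when restricted to $F_0$ and $F_1$, and whose interior heights are chosen to be generic in the sense of Definition \ref{generic}, for instance via a generic point on the moment curve $C_n$ lifted by a strictly convex auxiliary function. Genericity with respect to the resonance arrangement is precisely the tool designed to guarantee that every cell of the resulting regular subdivision is simplicial, so the subdivision is in fact a triangulation.

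For unimodularity I would argue cell by cell. The goal is to show that every simplex $T \in \mc{T}$ is an integral generalized permutahedron; this ought to follow because a subdivision respecting the deletion-contraction recursion preserves the property of cells having normal fans that coarsen the braid arrangement. Given this, Lemma \ref{flag} forces $\aff(T)$ to be cut out by equations $x_{S_i} = b_i$ for a flag $\emptyset = S_0 \subsetneq S_1 \subsetneq \cdots \subsetneq S_j = [n]$ with $b_i \in \bb{Z}$. Combined with Theorem \ref{gelmat}, which forces every edge direction of $T$ to be of the form $e_i - e_j$, a short lattice argument shows that the edge vectors at any vertex of $T$ form a $\bb{Z}$-basis of $\vec{\aff(T)} \cap \bb{Z}^n$, which is exactly unimodularity.

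The main obstacle I anticipate is producing a single height function that simultaneously satisfies (a) compatibility with the inductively given $\mc{T}_0$ and $\mc{T}_1$ on the facets $F_0, F_1$, (b) genericity strong enough to force every cell of the subdivision to be a simplex, and (c) preservation of the generalized-permutahedron property on every cell so that Lemma \ref{flag} can be invoked. Establishing (c), i.e.\ that the regular subdivision produced by such a height has all cells that are integral generalized permutahedra, is probably the most delicate combinatorial step, and is likely where the bulk of the technical work in the paper resides.
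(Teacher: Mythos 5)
Your overall frame (induction on $n$, splitting $P_M$ along $x_e=0$ and $x_e=1$ into the deletion and contraction polytopes, and gluing inductively given regular unimodular triangulations via a height function) matches the paper. But step (c), on which your unimodularity argument entirely rests, is not just delicate --- it is false. The cells of any such glued subdivision are of the form $\conv(F_0\times\{0\}\cup F_1\times\{1\})$ with $F_i$ a face of the corresponding facet, and the edges joining a vertex of $F_0\times\{0\}$ to a vertex of $F_1\times\{1\}$ are differences of indicator vectors of two bases that may differ in many elements; these are not of the form $e_i-e_j$, so the cells are not generalized permutahedra and Lemma \ref{flag}/Theorem \ref{gelmat} cannot be applied to them. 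Indeed, a triangulation of a matroid base polytope into simplices that are themselves matroid base polytopes does not exist in general (matroid subdivisions of, say, the hypersimplex $\Delta_{2,4}$ cannot be refined to simplices), so no choice of height function can rescue (c). A second, related misreading: genericity in the sense of Definition \ref{generic} is not used to make cells simplicial (generic heights alone would give a triangulation into simplices spanned by $0$-$1$ vertices, but such simplices need not be unimodular --- this is exactly why the problem was open); in the paper it is the \emph{difference} $\ell_0-\ell_1$ of the affine functionals used on the two facets that is required to be nonconstant on positive-dimensional flats of the resonance arrangement, and this is used to force $\aff(F_0)$ and $\aff(F_1)$ to be \emph{independent} for every cell of the coarse subdivision.

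The missing idea is the lattice-theoretic substitute for (c): Lemma \ref{complementary}, which says that if two matroid base polytopes have independent affine spans, then the lattices $\aff(F_0)\cap\bb Z^{n-1}$ and $\aff(F_1)\cap\bb Z^{n-1}$ are \emph{complementary}; its proof uses the flag description of Lemma \ref{flag} and the total unimodularity of bipartite incidence matrices (essentially matroid intersection). Granting this, together with Lemma \ref{rational}, the join of a unimodular simplex $T_0\subseteq F_0$ with a unimodular simplex $T_1\subseteq F_1$, using one cross edge whose first coordinate is $1$, is shown directly to be unimodular, with no need for the cells themselves to be generalized permutahedra. You would also need to be more careful with your height function: the paper takes $f=\ell_i+\epsilon f_i$ on the facet $x_1=i$ with $\epsilon$ small, so that the induced subdivision refines the coarse one determined by $\ell_0,\ell_1$ while restricting to the inductive triangulations on the two facets; a single "generic moment-curve" lift does not achieve this compatibility.
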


Before providing a proof, we briefly give some context for our construction.  Two fundamental operations on a matroid are the deletion and contraction of an element, and many important constructions in matroid theory proceed by an inductive appeal to these operations.  If $e$ is a loop or coloop, then the matroid base polytope $P_M$ is translation equivalent to $P_{M/e}$ and $P_{M\setminus e}$.   If $e$ is neither a loop nor a coloop then $P_M$ is the convex hull of $P_{M/e}$ and $P_{M\setminus e}$.  In this way, our recursive construction fits into the paradigm of deletion-contraction. 

\begin{proof}[Proof of Theorem~\ref{main}]
Let $M=(E,\mathcal{B})$ be a matroid with ground set $E=[n]$, and $P_M \subset \bb R^n$ its matroid base polytope. We will use $\verti(P_M)$ to denote the vertices of $P_M$. We show $P_M$ has a unimodular triangulation by induction on $n$. If $n = 1$, then $P_M$ is a point and we are done.

Assume $n \ge 2$.  Let $P_0$ and $P_1$ be the polytopes in $\bb R^{n-1}$ such that $P_0 \times \{0\} = P_M \cap \{x_1 = 0\}$ and $P_1 \times \{1\} = P_M \cap \{x_1 = 1\}$. Note that $P_0$ or $P_1$ may be empty, which occurs if 1 is a loop or coloop. If $P_0$ is nonempty then it is the matroid base polytope of $M \setminus 1$, and if $P_1$ is nonempty then it is the matroid base polytope of $M / 1$.

By the inductive hypothesis, $P_0$ and $P_1$ have regular unimodular triangulations. (We assume an empty polytope has a regular unimodular triangulation induced by a function with empty domain.) Let $f_0 : \verti(P_0) \to \bb R$ and $f_1 : \verti(P_1) \to \bb R$ be functions which induce these triangulations. Let $\ell_0, \ell_1 : \bb R^{n-1} \to \bb R$ be affine functionals such that $\ell_0 - \ell_1$ is generic.  Let $\epsilon>0$ be sufficiently small, and define $f : \verti(P_M) \to \bb R$ to be the function
\[
f(x) = \begin{dcases*}
\ell_0(x_2,\dots,x_n) + \epsilon f_0(x_2,\dots,x_n) & if $x_1 = 0$ \\
\ell_1(x_2,\dots,x_n) + \epsilon f_1(x_2,\dots,x_n) & if $x_1 = 1$.
\end{dcases*}
\]

We claim that $f$ induces a unimodular triangulation of $P_M$\footnote{That the subdivision induced by $f$ is independent of $\epsilon$ for $\epsilon$ sufficiently small is guaranteed by the existence of the secondary fan of $P_M$.}. We will need the following lemmas. 

We say that two linear subspaces $V$, $W$ of $\bb R^n$ are \emph{independent} if $V \cap W = \emptyset$.
We say that two rational linear subspaces $V$, $W$ of $\bb R^n$ are \emph{complementary} if $(V \cap \bb Z^n) + (W \cap \bb Z^n) = (V+W) \cap \bb Z^n$. Equivalently, $V$ and $W$ are complementary if there exist $A \subset (V \cap \bb Z^n)$ and $B \subset (W \cap \bb Z^n)$ such that $A \cup B$ generates $(V+W) \cap \bb Z^n$ over the integers. 

\begin{lem}\label{rational}
    Let $V$ and $W$ be independent complementary rational linear subspaces in $\mathbb{R}^n$.  If $X$ and $Y$ are rational subspaces of $V$ and $W$, respectively, then $X$ and $Y$ are complementary. 
\end{lem}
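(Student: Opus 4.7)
The plan is to verify the two defining conditions of complementarity for $X \cap \bb Z^n$ and $Y \cap \bb Z^n$. Independence of the spans is immediate: since $X \subseteq V$ and $Y \subseteq W$ and $V \cap W = \{0\}$, we have $X \cap Y = \{0\}$. What remains is to exhibit, for any chosen lattice bases $B_X$ of $X \cap \bb Z^n$ and $B_Y$ of $Y \cap \bb Z^n$, that $B_X \cup B_Y$ generates $(X+Y) \cap \bb Z^n$ over $\bb Z$ (linear independence over $\bb Z$ will follow from $X \cap Y = \{0\}$).

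The key step is the following decomposition claim: every $z \in (X+Y) \cap \bb Z^n$ can be written as $z = x + y$ with $x \in X \cap \bb Z^n$ and $y \in Y \cap \bb Z^n$. To prove it, I would exploit the uniqueness of the direct sum decomposition inside $V \oplus W$. Since $X \subseteq V$ and $Y \subseteq W$ and these are independent, $z$ has a unique decomposition $z = x + y$ with $x \in X$, $y \in Y$; this is simultaneously the unique decomposition of $z$ in $V \oplus W$. Because $V \cap \bb Z^n$ and $W \cap \bb Z^n$ are assumed to be complementary, and $z \in (V+W) \cap \bb Z^n$, we can also write $z = v + w$ with $v \in V \cap \bb Z^n$ and $w \in W \cap \bb Z^n$. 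Uniqueness inside $V \oplus W$ forces $v = x$ and $w = y$, giving $x \in V \cap \bb Z^n$ and $y \in W \cap \bb Z^n$. Combined with $x \in X$ and $y \in Y$, this yields $x \in X \cap \bb Z^n$ and $y \in Y \cap \bb Z^n$.

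Given the claim, any $z \in (X+Y) \cap \bb Z^n$ is a $\bb Z$-linear combination of $B_X \cup B_Y$ via its decomposition $z = x + y$. Since the sum $X + Y$ is direct, $B_X \cup B_Y$ is linearly independent over $\bb Z$, and therefore a lattice basis for $(X+Y) \cap \bb Z^n$. This establishes complementarity of $X \cap \bb Z^n$ and $Y \cap \bb Z^n$.

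There is really no major obstacle here; the entire content of the lemma is the observation that the unique $V \oplus W$-decomposition of an integer point automatically respects the rational subspaces $X$ and $Y$. The only place one has to be careful is to note that an element of $X$ which happens to lie in $\bb Z^n$ lies in $X \cap \bb Z^n$ (so that the decomposition $z = x + y$ above actually witnesses complementarity, not merely some weaker integrality statement).
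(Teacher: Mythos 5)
Your proof is correct and takes essentially the same route as the paper: both arguments use the complementarity of $V \cap \bb Z^n$ and $W \cap \bb Z^n$ together with the directness of the sum $V \oplus W$ to show that every integer point of $X+Y$ splits integrally into a point of $X \cap \bb Z^n$ plus a point of $Y \cap \bb Z^n$, and hence lies in the integral span of $B_X \cup B_Y$. The only difference is technical: the paper first extends $B_X$ and $B_Y$ to lattice bases of $V \cap \bb Z^n$ and $W \cap \bb Z^n$ and argues that the integer coefficients on the extra basis vectors vanish, whereas you decompose each integer point directly via uniqueness of the $V \oplus W$ decomposition, which sidesteps the (standard, but unproved in the paper) lattice-basis-extension step.
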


\begin{proof}
    Take lattice bases $B_X$ of $X \cap \bb Z^n$ and $B_Y$ of $Y \cap \bb Z^n$, and extend them to lattice bases $B_V$ of $V \cap \bb Z^n$ and $B_W$ of $W \cap \bb Z^n$, respectively.  Because $V$ and $W$ are independent and complementary, $B_V \cup B_W$ gives a basis for $(V + W) \cap \bb Z^n$.  For any $x \in (X+Y) \cap \bb Z^n$ we know that $x$ is an integral combination of elements of $B_V \cup B_W$ and thus must be an integral combination of elements of $B_X \cup B_Y$.  Hence $X$ and $Y$ are complementary.
\end{proof}


We warn the reader that $P$ and $Q$ in the following lemma will not correspond to $P_0$ and $P_1$.

\begin{lem} \label{complementary}
Let $P$, $Q$ be two matroid base polytopes. Then $\lin(P)$ and $\lin(Q)$ are complementary.
\end{lem}

\begin{proof}

It is well-known that $\{e_i-e_j\}_{1 \le i < j \le n}$ is a \emph{totally unimodular system}, i.e., every subset $U \subset \{e_i-e_j\}_{1 \le i < j \le n}$ is a generating set for the lattice $(\vspan_{\bb R}{U}) \cap \bb Z^n$.

Let $A \subset \lin(P)$ be the set of edge directions of $P$ and let $B \subset \lin(Q)$ be the set of edge directions of $Q$. Since all edge directions of $P$ and $Q$ are of the form $e_i-e_j$ and $\{e_i-e_j\}_{1 \le i < j \le n}$ is a totally unimodular system, we have that $A \cup B$ generates the lattice $\vspan_{\bb R}(A \cup B) \cap \bb Z^n = (\lin(P) + \lin(Q)) \cap \bb Z^n$, as desired.
\end{proof}

We now return to the main proof. If either $P_0$ or $P_1$ is empty, then $f$ induces a unimodular triangulation on $P_M$ by the definition of $f_0$ and $f_1$. Thus, assume $P_0$ and $P_1$ are not empty. Let $g : \verti(P) \to \bb R$ be the function
\[
g(x) = \begin{dcases*}
\ell_0(x_2,\dots,x_n) & if $x_1 = 0$ \\
\ell_1(x_2,\dots,x_n) & if $x_1 = 1$.
\end{dcases*}
\]
Let $\mc S$ be the subdivision of $P_M$ induced by $g$. Since $g$ is affine on $P_0$ and $P_1$, it must be that $P_0, P_1 \in \mc S$. Hence every cell of $\mc S$ is of the form $\conv( F_0 \times \{0\} \cup F_1 \times \{1\} )$, where $F_0$ and $F_1$ are faces of $P_0$ and $P_1$, respectively. 

We claim that because $\ell_0 - \ell_1$ is generic, $\lin(F_0)$ and $\lin(F_1)$ are independent. Note that the function $g$ and the function
\[
g'(x) = \begin{dcases*}
0 & if $x_1 = 0$ \\
\ell_1(x_2,\dots,x_n) - \ell_0(x_2,\dots,x_n) & if $x_1 = 1$.
\end{dcases*}
\]
differ by an affine function on $\bb R^n$, and therefore they induce the same subdivision on $P$. Moreover, $0 - (\ell_1 - \ell_0) = \ell_0 - \ell_1$. Therefore, by replacing $g$ with $g'$, we may assume $\ell_0 = 0$.
 
 Let $A_0 = \aff(F_0)$ and $A_1 = \aff(F_1)$ and suppose that $\lin(A_0)$ and $\lin(A_1)$ are not independent.  Then $L = \lin(A_0) \cap \lin(A_1)$ is a positive dimensional linear space, and by Lemma \ref{flag} it is a flat in the resonance arrangement. Furthermore, $x_0+L \subset A_0$ and $x_1+L \subset A_1$ for some $x_0$, $x_1 \in \bb Z^{n-1}$. Now, let $\tilde{g} : \bb R^n \to \bb R$ be an affine function which agrees with $g$ on $\conv( F_0 \times \{0\} \cup F_1 \times \{1\} )$. (This exists by the assumption that $\conv( F_0 \times \{0\} \cup F_1 \times \{1\} )$ is a cell in the subdivision induced by $g$.) Since $\ell_0 = 0$, we have that $\tilde{g}$ is 0 on $A_0 \times \{0\}$, and hence it is 0 on $(x_0+L)\times\{0\} \subset A_0 \times \{0\}$. Since $\tilde{g}$ is affine, it is therefore constant on $(x_1+L) \times \{1\}$. Since $(x_1+L) \times \{1\} \subset A_1 \times \{1\}$, it follows that $\ell_1$ is constant on $x_1+L$, and hence it is constant on $L$. This contradicts the assumption that $\ell_0-\ell_1$ is non-constant on positive dimensional flats of the resonance arrangement.  Thus $\lin(F_0)$ and $\lin(F_1)$ are independent. Each face of a matroid base polytope is a matroid base polytope, hence by Lemma \ref{complementary}, $\lin(F_0)$ and $\lin(F_1)$ are complementary.

Now, let $\mc T$ be the subdivision of $P_M$ induced by $f$.  Because $f$ is a perturbation of $g$, it follows that $\mc T$ refines $\mc S$.  Furthermore, since $f$ restricted to $P_0 \times \{0\}$ is an affine function plus $\epsilon f_1(x_2,\dots,x_n)$, the restriction of $\mc T$ to $P_0 \times \{0\}$ is a unimodular triangulation. Similarly the restriction of $\mc T$ to $P_1 \times \{1\}$ is a unimodualar triangulation. Let $T$ be a cell of $\mc T$, and let $S = \conv( F_0 \times \{0\} \cup F_1 \times \{1\} )$ be a cell of $\mc S$ containing $T$. We have $T = \conv( T_0 \times \{0\} \cup T_1 \times \{1\} )$, where $T_i$ is a unimodular simplex contained in $F_i$.  Because $\lin(F_0)$ and $\lin(F_1)$ are independent, $\lin(T_0)$ and $\lin(T_1)$ are independent, hence $T$ is a simplex.  Moreover, since $\lin(F_0)$ and $\lin(F_1)$ are complementary and independent, it follows by Lemma \ref{rational} that $\lin(T_0)$ and $\lin(T_1)$ are complementary.  Thus we can form a basis $B$ of $(\lin(T_0)+\lin(T_1)) \cap \bb Z^{n-1}$ from maximal collections of linearly independent edge vectors of $T_0$ and $T_1$.  Each edge vector of $T$ from a vertex in $T_0$ to a vertex in $T_1$ has first coordinate 1. We claim that we can add any such edge vector $v$ to $B$ to give a basis $B'$ of $\lin(T) \cap \bb Z^n$.  Given $x \in \lin(T) \cap \bb Z^n$ we explain that $x$ is in the integral span of $B'$.  We can subtract an integer multiple of $v$ from $x$ so that the first coordinate is zero.  The resulting vector must be in $\lin(T_0)+\lin(T_1)$ and is integral, hence it is in the integral span of $B$.  We conclude that $T$ is a unimodular simplex and $\mc T$ is a unimodular triangulation.
\end{proof}


\begin{remark}
We would like to point out the connection of our proof to Edmond's matroid intersection theorem \cite{edmonds2003submodular} and Lemma 4.15 of Haase et al. \cite{haase2021existence}. The matroid intersection theorem states that the intersection of two matroid polytopes is an integral polytope. For the purposes of this discussion, an \emph{alcoved polytope} is an integral polytope in $\{x \in \bb R^n : \sum x_i = 0\}$ whose facet directions belong to $\{e_i-e_j\}_{1 \le i,j \le n}$.\footnote{The usual definition of an alcoved polytope is unimodularly equivalent to this one.}
Lemma 4.15 of \cite{haase2021existence} implies that if $P_0$, $P_1$ are alcoved polytopes, then $\conv(P_0 \times \{0\} \cup P_1 \times \{1\})$ has a unimodular triangulation.

Let $\mathcal{A}$ be a family of integral polytopes in $\bb R^n$ closed under taking faces and translation by integral vectors. For a positive integer $k$, consider the following two statements:
\begin{enumerate}[(A)]
\item For any $P_1$, \dots, $P_k \in \mathcal{A}$ which have unimodular triangulations, $\conv(P_1 \times e_1, \dots, P_k \times e_k)$ has a unimodular triangulation, where $e_1$, \dots, $e_k$ are the vertices of a unimodular simplex in $\bb R^{k-1}$.
\item For any $P_1$, \dots, $P_k \in \mathcal{A}$, $P_1 \cap \dots \cap P_k$ is an integral polytope.
\end{enumerate}
For example, when $k = 2$ and $\mc A$ is the set of integral translations of matroid polytopes, (A) is implied by the proof of Theorem~\ref{main} and (B) is the matroid intersection theorem. When $k =2$ and $\mc A$ is the set of alcoved polytopes, (A) is implied by Lemma 4.15 of \cite{haase2021existence} and (B) follows because the intersection of alcoved polytopes is alcoved. (This uses the total unimodularity of $\{e_i-e_j\}_{1 \le i,j \le n}$.)

We say that rational subspaces $W_1$, \dots, $W_k \subset \bb R^n$ are \emph{complementary}  if $\sum (W_i \cap \bb Z^n) = (\sum W_i) \cap \bb Z^n$.
Then (A) and (B) are equivalent to the following two statements, respectively. 
\begin{enumerate}[(A$'$)]
\item For any $P_1$, \dots, $P_k \in \mathcal{A}$, $\lin(P_1)$, \dots, $\lin(P_k)$ are complementary. 
\item For any $P_1$, \dots, $P_k \in \mathcal{A}$, $\lin(P_1)^\perp$, \dots, $\lin(P_k)^\perp$ are complementary.
\end{enumerate}
The equivalence (A) $\iff$ (A$'$) can be proven by modifying our proof of Theorem~\ref{main}. The equivalence (B) $\iff$ (B$'$) is standard from integer programming.

If $k = 2$, then (A$'$) and (B$'$) are equivalent, since rational subspaces $V$, $W$ are complementary if and only if $V^\perp$, $W^\perp$ are. Thus for $k = 2$ (A) and (B) are equivalent for any such family $\mc A$.

If $P$ is a matroid polytope, then $\lin(P)$ is spanned by vectors of the form $e_i-e_j$, while if $P$ is alcoved then $\lin(P)^\perp$ is spanned by vectors of the form $e_i-e_j$. Thus, (A$'$) for matroid polytopes and (B$'$) for alcoved polytopes are true for all $k$ by the total unimodularity of $\{e_i-e_j\}$. Thus, for all $k$, (A) is true for matroid polytopes and (B) is true for alcoved polytopes. By the previous paragraph, it follows that (A) and (B) are both true for $k =2$ for both of these families. (This gives an alternate proof of Lemma 4.15 of \cite{haase2021existence}.)  

On the other hand, for $k > 2$, (B$'$) is not true for matroid polytopes and (A$'$) is not true for alcoved polytopes. This is reflected in the fact that the matroid intersection theorem fails for intersections of three polytopes, while Lemma 4.15 of \cite{haase2021existence} fails for three alcoved polytopes, as was noted by those authors.
\end{remark}

The proof of Theorem \ref{main} implies the following more explicit statement: 

\begin{thrm} \label{functiondesc}
Let $P \in \bb R^n$ be a matroid base polytope. For each string $s \in \bigsqcup_{k=1}^{n-1} \{0,1\}^k$, let $\ell_s : \bb R^{n-|s|} \to \bb R$ be an affine functional, where $|s|$ is the length of $s$. Assume that $\ell_{s'0} - \ell_{s'1}$ is generic for all strings $s'$. Then for $1 \gg \epsilon_1 \gg \epsilon_2 \gg \dots \gg \epsilon_{n-1} > 0$, the function $f : \verti(P) \to \bb R$ defined by
\[
f(x) = \sum_{k=1}^{n-1} \epsilon_k \ell_{x_1 \dots x_k}(x_{k+1},\dots,x_n)
\]
induces a regular unimodular triangulation on $P_M$.
\end{thrm}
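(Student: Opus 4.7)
My plan is to derive the explicit formula by unrolling the recursion in the proof of Theorem~\ref{main} and inducting on $n$. The base case $n=1$ is immediate since the sum defining $f$ is empty and $P$ is a point. For the inductive step, I would invoke the construction of Theorem~\ref{main} with the top-level affine functionals chosen to be $\ell_0$ and $\ell_1$ (the strings of length one); the hypothesis $\ell_{s'0} - \ell_{s'1}$ generic for the empty string $s'$ supplies the genericity requirement needed to apply the construction.

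Next, I would apply the inductive hypothesis to each of the lower-dimensional matroid base polytopes $P_0 = P_M \cap \{x_1=0\}$ and $P_1 = P_M \cap \{x_1=1\}$, using the subfamily of functionals indexed by strings starting with $0$ and $1$ respectively. The remaining genericity hypotheses $\ell_{s'0} - \ell_{s'1}$ generic (for strings $s'$ beginning with $0$ or $1$) are exactly what is required for those recursive applications. This produces constants $\delta_1 \gg \dots \gg \delta_{n-2} > 0$ (taken as a common refinement of the hierarchies needed for $P_0$ and for $P_1$) so that the inductive function on $P_i$ can be written as
\[
f_i(y_1,\dots,y_{n-1}) \;=\; \sum_{j=1}^{n-2} \delta_j\, \ell_{i\,y_1 \cdots y_j}(y_{j+1},\dots,y_{n-1}).
\]
Plugging $f_0, f_1$ and a sufficiently small $\epsilon > 0$ into the top-level formula $f(x) = \ell_{x_1}(x_2,\dots,x_n) + \epsilon f_{x_1}(x_2,\dots,x_n)$ and reindexing gives
\[
f(x) \;=\; \ell_{x_1}(x_2,\dots,x_n) + \sum_{k=2}^{n-1} \epsilon\,\delta_{k-1}\, \ell_{x_1 \cdots x_k}(x_{k+1},\dots,x_n),
\]
which is the desired formula with $\epsilon_1 = 1$ and $\epsilon_k = \epsilon\,\delta_{k-1}$. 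The chain $\epsilon_1 \gg \epsilon_2 \gg \dots \gg \epsilon_{n-1} > 0$ follows from the corresponding chain of $\delta_j$'s and the smallness of $\epsilon$. Since regular subdivisions are preserved under positive scaling of the inducing function, the same conclusion holds for any $1 \gg \epsilon_1 \gg \dots \gg \epsilon_{n-1} > 0$.

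The only delicate point, and the place where care is required, is the bookkeeping of the ``sufficiently small'' hierarchy: the inner $\delta_j$'s produced by the inductive hypothesis depend on the inner matroid, and the outer $\epsilon$ from Theorem~\ref{main} must be chosen small relative to both $f_0$ and $f_1$ simultaneously. Neither of these dependencies is problematic — one simply chooses a common refinement at each level — but this is the step one must articulate precisely to ensure that the resulting hierarchy $1 \gg \epsilon_1 \gg \dots \gg \epsilon_{n-1} > 0$ can be realized for any prescribed sequence satisfying those inequalities.
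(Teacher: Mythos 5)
Your proposal is correct and follows essentially the same route as the paper, whose proof of Theorem~\ref{functiondesc} is precisely to unwind the induction in the proof of Theorem~\ref{main}; your unrolling of the recursion, with the genericity hypotheses distributed to the recursive calls on $P_0$ and $P_1$ and the $\epsilon$-hierarchy handled by rescaling and refinement, matches the intended argument.
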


\begin{proof}
This is obtained by unwinding the induction in the proof of Theorem~\ref{main}. 
\end{proof}

We now explain how to extend our triangulation to all integral generalized permutahedra.

\begin{cor}
Every integral generalized permutahedron has a regular unimodular triangulation.
\end{cor}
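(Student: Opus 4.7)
The plan is to imitate the proof of Theorem~\ref{main}, but to slice at every integer value of $x_1$ rather than only at $0$ and $1$. I induct on the ambient dimension $n$. For $n=1$, an integral generalized permutahedron is an integer segment (possibly a point), triangulated into unit intervals by any strictly convex height function on its lattice points.

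For $n\ge 2$, set $a=\min_{v\in V(P)} v_1$ and $b=\max_{v\in V(P)} v_1$. For each integer $k\in[a,b]$ the slice $P_k\subset\mathbb{R}^{n-1}$ determined by $P_k\times\{k\}=P\cap\{x_1=k\}$ is a nonempty integral generalized permutahedron: the edge-direction criterion of Definition~\ref{genper} is preserved under the slice (new edges arising from $2$-faces of $P$ lie in directions $e_i-e_j$ with $i,j\ne 1$), and integrality of any new vertex follows from $k\in\mathbb{Z}$ together with the fact that edges of $P$ are integer scalings of $e_i-e_j$. By induction each $P_k$ admits a regular unimodular triangulation induced by some $f_k\colon P_k\cap\mathbb{Z}^{n-1}\to\mathbb{R}$. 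I then select a strictly convex $h\colon\{a,\ldots,b\}\to\mathbb{R}$ and affine functionals $\ell_a,\ldots,\ell_b$ on $\mathbb{R}^{n-1}$ such that each consecutive difference $\ell_k-\ell_{k+1}$ is generic in the sense of Definition~\ref{generic}. For sufficiently small $\epsilon>0$, the height function
\[
f(v)=h(v_1)+\epsilon\,\ell_{v_1}(v_2,\ldots,v_n)+\epsilon^2 f_{v_1}(v_2,\ldots,v_n)
\]
on $P\cap\mathbb{Z}^n$ should induce the desired triangulation.

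Verification uses the three scales of $f$. The leading term forces the induced subdivision to refine the layered decomposition $P=\bigcup_{k=a}^{b-1} L_k$ with $L_k=\conv(P_k\times\{k\}\cup P_{k+1}\times\{k+1\})$. Inside each $L_k$, the middle and bottom terms precisely reproduce the two-scale construction in the proof of Theorem~\ref{main} applied to the pair $(P_k,P_{k+1})$: the between-slice difference $\ell_k-\ell_{k+1}$ plays the role of $\ell_0-\ell_1$, and $f_k,f_{k+1}$ play the role of the inductive height functions. The argument of Theorem~\ref{main} then applies essentially verbatim; the one remark needed is that Lemma~\ref{complementary}, although stated for matroid base polytopes, holds for arbitrary integral generalized permutahedra because its proof uses only Lemma~\ref{flag} and the fact that faces of an integral generalized permutahedron are integral generalized permutahedra.

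The main obstacle I expect is global consistency: each intermediate slice $P_k$ is a facet of both $L_{k-1}$ and $L_k$, and the triangulations these two layers induce on it must agree in order for the layer-by-layer pieces to glue to a subdivision of $P$. This holds because $h(k)$ is a constant and $\ell_k$ is affine on $P_k\times\{k\}$, so both restrictions of $f$ reduce, up to an affine function, to $\epsilon^2 f_k$, which induces the inductive triangulation of $P_k$. With this consistency in place, the per-layer unimodular triangulations assemble into a single regular unimodular triangulation of $P$.
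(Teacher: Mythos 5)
Your proof is correct, but it takes a genuinely different route from the paper. You slice only along the single coordinate $x_1$ at all integer levels, induct on the ambient dimension directly at the level of integral generalized permutahedra, and rerun the two-level argument of Theorem~\ref{main} inside each slab $L_k=\conv(P_k\times\{k\}\cup P_{k+1}\times\{k+1\})$; the key observations making this legitimate are exactly the ones you flag, namely that the integer slices $P_k$ are again integral generalized permutahedra, and that Lemma~\ref{complementary} holds for integral generalized permutahedra since its proof uses only Lemma~\ref{flag} (plus the fact that faces of integral generalized permutahedra are again such). You also implicitly use that $P\cap\{k\le x_1\le k+1\}$ equals $\conv(P_k\times\{k\}\cup P_{k+1}\times\{k+1\})$ and that each slab is the convex hull of its lattice points at the two levels, both of which follow from integrality of $P$ and of the slices; and your gluing worry is in fact automatic, since the whole subdivision is induced by the single global height function. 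The paper instead dices $P$ by \emph{all} integer coordinate hyperplanes, so that each cell of the dicing is a translate of a matroid base polytope (via Theorem~\ref{gelmat} and Lemma~\ref{permint}), and then applies the already-established Theorem~\ref{functiondesc} cell by cell through one global height $g+\sum_k\epsilon_k\ell_{x_1\cdots x_k}$. The trade-off: the paper's route reuses the matroid theorem as a black box and needs no new facts about slices of generalized permutahedra, while your route shows that the deletion--contraction-style induction of Theorem~\ref{main} works verbatim for integral generalized permutahedra, which is a slightly stronger structural observation, at the cost of verifying the slice and complementarity statements in that generality. One small inaccuracy: under Definition~\ref{genper} (which imposes $x_{[n]}=f([n])$), a generalized permutahedron in $\mathbb{R}^1$ is a point rather than a segment, but your base case is trivially fine either way.
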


\begin{proof}
Let $P \in \bb R^n$ be an integral generalized permutahedron. By translating $P$ if necessary, we may assume without loss of generality that there is some positive integer $R$ such that $P \subset \{x : 0 \le x_k \le R \text{ for all } 1 \le k \le n \}$. It is known that dicing $P$ by the hyperplanes $\{x_k = c\}$ where $c$ and $k$ are integers with $1 \le k \le n$ and $0 \le c \le R$ gives a regular integral subdivision $\mc X$ of $P$, and every cell of the subdivision is a translation of a matroid base polytope\footnote{\,This can be verified by appealing to the submodularity description of generalized permutahedra, Lemma \ref{permint}, and Theorem \ref{gelmat}.}. Let $g : P \cap \bb Z^n \to \bb R$ be a function which induces $\mc X$.

For each $s \in \bigsqcup_{k=1}^{n-1} \{0,\dots,R\}^k$, choose an affine functional $\ell_s : \bb R^{n-|s|} \to \bb R$ so that $\ell_{s'i} - \ell_{s'(i+1)}$ is generic for all strings $s'$ and integers $i$. For $1 \gg \epsilon_1 \gg \epsilon_2 \gg \dots \gg \epsilon_{n-1} > 0$, define the function $f : P \cap \bb Z^n \to \bb R$ by
\[
f(x) = g(x) + \sum_{k=1}^{n-1} \epsilon_k \ell_{x_1 \dots x_k}(x_{k+1},\dots,x_n).
\]

Then $f$ induces a subdivision of $P$ which refines $\mc X$. Moreover, by Theorem~\ref{functiondesc}, the restriction of $f$ to each cell of $\mc X$ induces a unimodular triangulation.
\end{proof}

\begin{cor}\label{indep}
    Every matroid independence polytope has a regular unimodular triangulation.
\end{cor}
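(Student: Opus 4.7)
The plan is to realize $P_{\mc I}$ as the image of an integral generalized permutahedron $Q \subseteq \mathbb{R}^{n+1}$ under a lattice-preserving projection, and then invoke the preceding corollary. Let $M$ be a matroid on $[n]$ of rank $r$ with independence polytope $P_{\mc I}$. Introduce an auxiliary coordinate $x_0$ and define
\[
Q \;=\; \conv\bigl\{\,(r-|I|,\, \chi_I) \in \mathbb{R}^{n+1} : I \in \mc I(M)\,\bigr\}.
\]
Then $Q$ lies in the hyperplane $H = \{x_0 + x_1 + \dots + x_n = r\}$, and the projection $\pi : \mathbb{R}^{n+1} \to \mathbb{R}^n$ forgetting $x_0$ restricts to an affine isomorphism $H \to \mathbb{R}^n$ that sends $H \cap \mathbb{Z}^{n+1}$ bijectively onto $\mathbb{Z}^n$ and $Q$ bijectively onto $P_{\mc I}$.

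The main step is to verify that $Q$ satisfies condition (\ref{cond1}) of Definition \ref{genper}; integrality of the vertices is immediate from the construction. It is a standard consequence of matroid exchange (for instance, via the greedy algorithm) that every edge of $P_{\mc I}$ has direction $\pm e_i$ or $\pm(e_i - e_j)$. Under the lift, each edge of direction $e_i$ becomes $e_i - e_0$, while edges of the form $e_i - e_j$ are unchanged; hence every edge direction of $Q$ is $e_a - e_b$ for some $a, b \in \{0,1,\dots,n\}$. Thus $Q$ is an integral generalized permutahedron in $\mathbb{R}^{n+1}$.

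By the preceding corollary, $Q$ admits a regular unimodular triangulation $\mc T$, induced by some function $f : Q \cap \mathbb{Z}^{n+1} \to \mathbb{R}$. Pushing $\mc T$ forward along $\pi$ gives a triangulation of $P_{\mc I}$. Because $\pi$ restricts to a lattice isomorphism between $H \cap \mathbb{Z}^{n+1}$ and $\mathbb{Z}^n$, unimodular simplices map to unimodular simplices; regularity is inherited by taking the inducing function to be $g(x) = f(r - x_1 - \dots - x_n,\, x_1,\dots,x_n)$ on $P_{\mc I} \cap \mathbb{Z}^n$. The only non-bookkeeping point is the edge-direction characterization of $P_{\mc I}$, which is classical but deserves a brief justification; everything else is a matter of tracking lattices inside the hyperplane $H$.
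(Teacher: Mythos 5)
Your proposal is correct and follows essentially the same route as the paper: lift $P_{\mc I}$ into the hyperplane $\{x_0 + \dots + x_n = r\}$ via the coordinate $x_0 = r - \sum_i x_i$, observe the image is an integral generalized permutahedron, apply the preceding corollary, and pull the triangulation back along the unimodular projection. The only difference is that you sketch the edge-direction verification explicitly, where the paper simply cites that this lifted description is implicit in earlier work of Ardila et al.
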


\begin{proof}
Each matroid independence polytope $P_{\mathcal{I}}$ is unimodularily equivalent to an integral generalized permutahedron:  given a point $v =(v_1, \ldots v_n) \in P_{\mathcal{I}}$, let $\psi(v) = (v_0, v_1, \dots v_n) \in \mathbb{R}^{n+1}$, where $v_0 = r(E)-\sum_{i=1}^n v_i$.  The map $\psi$ is unimodular and its image is an integral generalized permutahedron\footnote{\,It is implicit in \cite{ardila2013lifted} that the independence polytope is unimodularily equivalent to a generalized permutahedron.}.  We can  apply our triangulation to $\psi(P_{\mathcal{I}})$ and then map this triangulation back to $P_{\mathcal{I}}$ to obtain a regular unimodular triangulation of the latter.
\end{proof}

\begin{example}\label{mainexample}
    
We provide an example of our triangulation for the cycle matroid of the complete graph $K_4$. Let $\verti(K_4) = \{v_0,v_1,v_2,v_3\}$.  To simplify notation we denote the edges of $K_4$ by integers:  $$v_0v_1=0,\, v_1v_2=1,\, v_0v_1=2,\, v_1v_3=3,\, v_0v_3=4,\, v_2v_3=5.$$ The bases are in the following order:
\begin{multicols}{3}
\begin{enumerate}\addtocounter{enumi}{-1}
\item \{0 1 3\}
\item \{1 2 3\}
\item \{1 3 4\}
\item \{0 1 4\}
\item \{0 1 5\}
\item \{1 2 5\}
\item \{1 4 5\}
\item \{1 2 4\}
\item \{0 2 4\}
\item \{2 3 4\}
\item \{0 2 3\}
\item \{0 2 5\}
\item \{2 3 5\}
\item \{0 3 5\}
\item \{3 4 5\}
\item \{0 4 5\}
\end{enumerate}
\end{multicols}

We take the height function described in Theorem \ref{functiondesc} as follows:  if $s$ is a string ending is 0, the function $\ell_s$ is 0. If a string ends in 1, and the string has length $k$, the function $\ell_s= (-3^{n-k-1}, -3^{n-k-2},\ldots,1)$. The cells of the associated triangulation are

\begin{multicols}{3}
\noindent \{3 7 8 9 12 14\}\\
\{3 5 7 8 12 14\}\\
\{3 5 6 7 8 14\}\\
\{3 5 8 11 12 14\}\\
\{3 5 6 8 11 14\}\\
\{3 4 6 11 14 15\}\\
\{3 4 5 6 11 14\}\\
\{3 4 5 11 12 14\}\\
\{3 6 8 11 14 15\}\\
\{0 3 8 11 14 15\}\\
\{0 3 4 5 11 12\}\\
\{0 3 4 5 12 14\}\\
\{0 3 4 11 12 14\}\\
\{0 3 4 5 6 14\}\\
\{0 3 4 11 14 15\}\\
\{0 4 11 12 13 14\}\\
\{0 4 11 13 14 15\}\\
\{0 3 5 8 11 12\}\\
\{0 3 8 11 12 14\}\\
\{0 3 5 6 7 14\}\\
\{0 10 11 12 13 14\}\\
\{0 8 10 11 14 15\}\\
\{0 8 10 11 12 14\}\\
\{0 8 9 10 12 14\}\\
\{0 10 11 13 14 15\}\\
\{0 3 5 7 12 14\}\\
\{0 3 5 7 8 12\}\\
\{0 2 3 6 7 14\}\\
\{0 2 3 7 9 14\}\\
\{0 1 7 9 10 12\}\\
\{0 1 5 7 10 12\}\\
\{0 5 8 10 11 12\}\\
\{0 7 8 9 10 12\}\\
\{0 5 7 8 10 12\}\\
\{0 1 2 6 7 14\}\\
\{0 1 2 7 9 14\}\\
\{0 1 7 9 12 14\}\\
\{0 1 5 7 12 14\}\\
\{0 1 5 6 7 14\}\\
\{0 3 7 9 12 14\}\\
\{0 3 8 9 12 14\}\\
\{0 3 7 8 9 12\}.
\end{multicols}

\end{example}

\begin{remark}\label{whiteproblem}
The authors, Matt Larson, and Sam Payne attempted to apply the construction of this article to produce quadratic triangulations of graphic matroid base polytopes, i.e. spanning tree polytopes.  We convinced ourselves that it not possible to do so using only $\ell_s$ above which are exponential.  We welcome others to attempt to apply our triangulation to White's conjecture.
\end{remark}

\section{Acknowledgements}
The authors would like to thank the University of Vermont, the University of Washington, and the Simons Center for Geometry and Physics Workshop ``Combinatorics and Geometry of Convex Polyhedra" for excellent working conditions where part of this work was completed.  The authors thank Mateusz Michalek for noting some typos in an earlier draft of this work, Luis Ferroni for providing further references, Matt Larson for providing Example \ref{mainexample}, and Sam Payne for suggesting a simplification of the proof of Lemma~\ref{complementary}.  The first author was supported by a Simons Collaboration Gift \# 854037 and an NSF Grant (DMS-2246967).

\bibliographystyle{alpha}
\small
\bibliography{references}

\end{document}